\def\BibTeX{{\rm B\kern-.05em{\sc i\kern-.025em b}\kern-.08em
    T\kern-.1667em\lower.7ex\hbox{E}\kern-.125emX}}
\title{Solution of the Continuous Time Bilinear Quadratic Regulator Problem by Krotov's Method}
\author{Ido~Halperin%
\thanks{Manuscript received ??? (Corresponding author: Ido Halperin.)}
\thanks{I. Halperin is with the Department of Civil Engineering, Ariel University, Ariel, 40700 Israel (email: idoh@ariel.ac.il)}%
}
\newtheorem{definition}{Definition}[section]
\newtheorem{theorem}{Theorem}[section]
\newtheorem{lemma}{Lemma}[section]
\newcommand{\Eqref}[1]{Eq. \eqref{#1}}
\newcommand{\figref}[1]{Fig. \normalfont{\ref{#1}}}
\newcommand{\ton}{\,\left[ \mathrm{ton}\right] }
\newcommand{\e}[1]{\cdot10^{#1}}
\newcommand{\nOmt}{\,\left[ \mathrm{N/m} \right] }
\newcommand{\aliu}[1]{\begin{align*}#1\end{align*}}
\newcommand{\mrb}[1]{\mathrm{\mathbf{#1}}} %matrix notation
\newcommand{\aM}{\mrb{M}}	
\newcommand{\aK}{\mrb{K}}	
\newcommand{\mtrx}[1]{\begin{bmatrix}#1\end{bmatrix}}
\newcommand{\aC}{\mrb{C}}	% output observation matrix
\newcommand{\diag}{\mathrm{\mathbf{diag}}}	% sign function
\newcommand{\kgOsec}{\,\left[ \mathrm{\frac{kg}{s}} \right] }
\newcommand{\ud}[0]{\mathrm{d}\,}
\newcommand{\mrbb}[1]{\mathrm{\boldsymbol{#1}}} %matrix notation
\newcommand{\bR}{\mathbb{R}}
\newcommand{\eqgrp}[1]{\begin{split}#1\end{split}}  %grouping several equations under single equation number
\newcommand{\ali}[1]{\begin{align}#1\end{align}}
\newcommand{\ag}{\mrb{g}}
\newcommand{\ap}{\mrb{p}}
\newcommand{\az}{\mrb{z}}
\newcommand{\aI}{\mrb{I}}
\newcommand{\aZe}{\mrb{0}}
\newcommand{\Eqsref}[1]{Eqs. \eqref{#1}}
\newcommand{\aA}{\mrb{A}}	% dynamic state matrix
\newcommand{\aB}{\mrb{B}}	% state control input matrix
\newcommand{\af}{\mrb{f}}	% general representation of state equation
\newcommand{\aH}{\mrb{H}}	
\newcommand{\aQ}{\mrb{Q}}	% LQ state weighting matrix
\newcommand{\aP}{\mrb{P}}	% Lyapunov or Riccati's equation solution matrix
\newcommand{\aN}{\mrb{N}}	% Bilinear component of bilinear state-space space
\newcommand{\aR}{\mrb{R}}	% LQ control weighting matrix
\newcommand{\au}{\mrb{u}}	% control signal vector
\newcommand{\ax}{\mrb{x}}	% state vector
\newcommand{\ay}{\mrb{y}}	% output vector
\newcommand{\aw}{\mrb{w}}	% dampers forces vector,
\newcommand{\agam}{\mrbb{\gamma}}	
\newcommand{\axi}{\pmb{\upxi}}	% DOF direction earthquake excitation vector
\newcommand{\anu}{\pmb{\upnu}}	% DOF direction earthquake excitation vector
\newcommand{\aPsi}{\mrbb{\Psi}}	% Matrix series used for continuous/discrete state space transformation.
\newcommand{\cU}{\mathcal{U}}	% The subset of which defines the  admissible control signals
\newcommand{\cX}{\mathcal{X}}	% The subset of which defines the  admissible state-space processes
\newenvironment{myremarks}{\nopagebreak\paragraph*{\textbf{Remarks.}}\nopagebreak\begin{itemize}}{\end{itemize}}
\begin{document}

\maketitle
\begin{abstract}
This work contributes to the field of optimal control of bilinear systems. 
It concerns a continuous time, finite dimensional, bilinear state equation with a quadratic performance index to be minimized. The state equation is non-autonomous and comprises a deterministic, a-priori known excitation. The control trajectory is constrained to an admissible set without a specific structure. The performance index is a functional, quadratic  in the state variables and control signals. 
Krotov's method is used for solving this problem by means of an improving sequence. To this end, the required sequence of an improving functions is formulated. Finally, the solution is encapsulated in an algorithm form and a numerical example of structural control problem is provided.
\end{abstract}

\begin{IEEEkeywords}
bilinear systems, optimal control, Krotov's method, structural control
\end{IEEEkeywords}

\section{Introduction}
Bilinear state-space models are simple nonlinear models, useful for capturing dynamic attributes of systems in various fields, such as 
quantum mechanics \cite{11Krot},
chaotic dynamics \cite{08Pardalos},
biology \cite{95Aganovich,05Reluga,18Danane},
mechanical damping \cite{15Khursh,15Brez,18Pisarski,17Michaj} and structural control \cite{17Hal01,17Hal02,17Hal03,19Hal,19Hal02}. 
Even when more complex nonlinear plants are addressed, they can sometime be well approximated by bilinear models\cite{96Wang,11Krot,20Hal_EarlyAccess}.\\

The ongoing research, conducted on such systems, has been yielding diverse results, including optimal control design tools. 
Among the published works, discussing finite dimensional bilinear systems, one can find results concerning
homogeneous \cite{88Hofer,97Chanane,05Tang,05Lee,05Sergeev,17Hal01,17Hal02,17Hal03} or inhomogeneous \cite{18Wang,19Hal,19Boukhari} plants,
continuous  \cite{88Hofer,97Chanane,05Lee,17Hal01,17Hal02,17Hal03,18Wang,19Boukhari} or discrete \cite{05Tang,05Sergeev,20Hal_EarlyAccess} time,
problems with control constraints \cite{05Sergeev,17Hal01,17Hal02,17Hal03,19Hal,19Hal02,19Boukhari,20Hal_EarlyAccess} and 
quadratic \cite{88Hofer,05Tang,05Lee,05Sergeev,17Hal01,17Hal02,19Boukhari} or biquadratic \cite{17Hal03,19Hal02} performance index.
These solutions are provided in a form of algorithm. Some furnish solutions that meet necessary optimality conditions, such as Pontryagin's minimum principle \cite{18Wang,17Hal01,17Hal02,17Hal03,19Hal,19Hal02}, while others rely on sufficient conditions such as Hamilton-Jacobi-Bellman equation \cite{08Adhyaru,17Breiten} and two-sided algorithms \cite{05Sergeev}.\\

In the sixties of the previous century, series of intriguing results on sufficient conditions for global optimum of optimal control problems, have began to be published by V. F. Krotov \mbox{\cite{88Krot}}. One of the outcomes of his work is a method for solving optimal control problems---``a global method of successive improvements of control'', or in its contemporary name---\textit{Krotov's method}. 
For illustrating the gist of it one can take as an example Lyapunov's direct method for stability.
Even though the method outlines properties for the required Lyapunov function, additional effort should be exerted for finding one. 
In this sense, Krotov's method and Lyapunov's method take similar approaches.
Basically, Krotov's method successively improves the control feedback until convergence is reached. However, for this to happen a special sequence of functions should be formulated.
This subproblem stands at the method's heart and is still, in general, an  open problem that should be solved in the context of a given case. The method has several benefits. First, each iteration is proven to furnish a process whose performance is either equal or better than its former. Second, it is suitable for problems where small variations in the control are not allowed, e.g. when the set of admissible control vectors is closed. Third, the resulting control is formulated in a feedback form \cite{95Krot}. 
This is unlike methods inspired by Pontryagin's minimum principle \cite{18Wang}, which are more convenient for obtaining  open loop solutions \cite{95Krot}.\\

Krotov's method was used for optimal control of bilinear systems in several works. For example, an optimal control of quantum systems by laser radiation can be defined as a single input homogeneous in state, bilinear system. 
Solving it through gradient methods can turn problematic
because there are trajectory portions where there is no information about the recommended variation of the control trajectory. Krotov's method, however, does not suffer of this deficiency \mbox{\cite{95Krot}}. 
A slightly more generalized version of this class of problems was addressed for two forms of performance index. The first   is quadratic in the control \mbox{\cite{11Krot}} and the other is in the state  \mbox{\cite{20Hal}}.
Additional types of optimal bilinear control problems, solved by Krotov's method, 
can be found in the literature \mbox{\cite{19Hal,19Hal02}}.\\

This paper provides another contribution to optimal control of finite dimensional bilinear systems by generalizing previous works. Here a continuous time, bilinear state equation is concerned with a quadratic performance index to be minimized. The state equation is time varying, given in a general bilinear form and comprises a deterministic, a-priori known excitation. The control trajectory is constrained to an admissible set without a specific structure. The performance index is a functional, quadratic  in the state variables and control signals, and includes a terminal cost. 
Krotov's method is used here for solving this problem. The novelties include the derived sequence of improving  functions as well as some additional theoretical remarks.
Section \ref{sec:PF} defines the addressed problem. It is addressed by Krotov's method, which is described in section \ref{sec:BG}. The main results are described in section \ref{sec:MR} and a numerical example is given in section \ref{sec:NE}. Section \ref{sec:CL} concludes the paper.\\

Throughout this paper, bold lowercase notation is used  for vectors, e.g. $\ax=(x_i)_{i=1}^{n_u}$, normally column ones. Bold uppercase notation is used for matrices. A trajectory is some vector function, e.g. $\ax$ is a state trajectory $\bR\rightarrow\bR^{n_u}$.  $\ax(t)$ refers to the trajectory $\ax$ evaluated at $t$.

\section{Problem Statement}
\label{sec:PF}
A set of admissible control signals is denoted below as $\cU(\ax)$. 
Its state dependency implies that it might be involved in the constraints laid upon the control signals. 
\begin{definition}
	\label{def:CBQRAdmProce}
	Let $\ax:\bR\rightarrow\bR^n$ be a state trajectory and $\au:\bR\rightarrow\bR^{n_u}$ be a  control trajectory. 
	Let $\cU_i(\ax)$ be a set of control signals, admissible at a state trajectory $\ax$. 
	If the pair of trajectories---$(\ax,\au)$---satisfies the  bilinear state equation
	\ali{
		\eqgrp{
		\dot\ax(t) =& \aA(t)\ax(t) + \aB(t)\au(t) + \{\au\aN(t)\}\ax(t) + \ag(t)\\
		&\ax(0), \forall t\in(0,t_f)
		}
		 \label{eq:CBQR_StateEq}
	} 
	and $u_i\in \cU_i(\ax)$ for all $i=1,\ldots,n_u$ then
	$(\ax,\au)$ is called an 'admissible process'.
	Here 
	$\{\au\aN(t)\}\triangleq\sum_{i=1}^{n_u} u_i(t)\aN_i(t)$;
	$\aA,\aN_i:\bR\rightarrow\bR^{n\times n}$;
	$\aB:\bR\rightarrow\bR^{n\times n_u}$ and
	$\ag:\bR\rightarrow\bR^n$. $\ag$ is a trajectory of external excitations.
\end{definition}

The \textit{continuous time bilinear quadratic regulator} (CBQR) problem is defined as follows.
\begin{definition}[CBQR]
\label{def:CBQR} 
Find an optimal and admissible process, $(\ax^*,\au^*)$, that  minimizes the quadratic performance index
\ali{
	\eqgrp{
	J(\ax,\au) =& \left(\frac{1}{2}\int\limits_0^{t_f} 
		\ax(t)^T\aQ(t)\ax(t)  +  \au(t)^T\aR(t)\au(t) 
		\ud t \right) \\
		&+ \frac{1}{2}\ax(t_f)^T\aH\ax(t_f)
		\label{eq:CBQR_J}
		}
}
where 
$\aQ:\bR\rightarrow\bR^{n\times n}$ such that $\aQ(t)\geq 0$,
$\aR:\bR\rightarrow\bR^{n_u\times n_u}$ such that $\aR(t)\geq 0$ and $\aH\geq 0$.
\end{definition}

\section{Background - Krotov's method}
\label{sec:BG}

This section succinctly overviews portions of Krotov's theory, tailored and relevant to the treated problem. The definitions and theorem, stated here, are needed for understanding the results provided in section \ref{sec:MR}. \\

Let
\ali{
	\dot \ax(t) = \af&(t,\ax(t),\au(t));	
	\quad\ax(0),
	\forall t\in(0,t_f)
	 \label{eq:StSpGenSt_2}
	}
be a state equation; 
$\cU\subseteq\{\bR\rightarrow\bR^{n_u}\}$ be a set of admissible control trajectories 
and
$\cX\subseteq\{\bR\rightarrow\bR^{n}\}$ be a set of state trajectories that are reachable from $\cU$ and $\ax(0)$.
In what follows, 
the term 
\textit{process} refers to some pair $(\ax\in\cX,\au\in\cU)$. 
A pair $(\ax,\au)$   that satisfies \Eqref{eq:StSpGenSt_2} is called an \textit{admissible process}.
Let the performance index be a non-negative functional $J:\cX\times\cU\rightarrow[0,\infty)$.
$\af$ and $J$ define an optimal control problem, as follows.

\begin{definition}[Constrained Optimal Control Problem]
\label{def:ocprob}
Find an admissible process $(\ax^*,\au^*)$ that minimizes
\ali{
	J(\ax,\au) =& l_f(\ax(t_f)) + \int\limits_0^{t_f} l(t,\ax(t),\au(t)) \ud t \label{eq:OCP_J}
	}
\end{definition}
In some problems it is easier to embrace sequential approach, instead of directly trying to synthesize $(\ax^*,\au^*)$. In this context, a useful type of sequence is given below.
\begin{definition}[Improving Sequence (\cite{95Krot}, section 6.2)]
Let $\{(\ax_k,\au_k)\}$ be a sequence of  admissible processes and assume that $\inf_{\substack{\ax\in\cX\\\au\in\cU}} J(\ax,\au)$ exists. If
\ali{
	J(\ax_k,\au_k)\geq J(\ax_{k+1},\au_{k+1}) \label{eq:Krot_OptSeq}
}
for all $k=0,1,2,\ldots$ and:
\ali{
	\lim_{k\rightarrow \infty} J(\ax_k,\au_k) 
}
exists, then $\{(\ax_k,\au_k)\}$ is an \textit{improving sequence}.
\end{definition}
Krotov's method is aimed at computing such a sequence by  successively improving  admissible processes.   
This is done by repeatedly solving a sub-problem of process improving, as follows. 
In the next  derivations, $\axi$ and $\anu$ are some vectors in $\bR^n$ and $\bR^{n_u}$, respectively. $\cX(t)\subseteq{\bR^n}$ refers to an intersection of $\cX$ at $t$, i.e, the set of vectors obtained by evaluating the trajectories in $\cX$ at $t$. Let $q:\bR\times\bR^n\rightarrow\bR$ be a function of time and the state vector. $q_t$ stands for $q$'s partial derivative w.r.t. the time argument.  
$q_\ax$ is its  gradient w.r.t. the state vector.
In order to improve a given process it is suffice to find $q$ and a feedback $\hat \au$, meeting the requirements described in the next theorem.
\begin{theorem}[\cite{95Krot}, theorem 6.1]
\label{thm:krotmeth}
Let a given admissible process be $(\ax_1,\au_1)$ and let $q:\bR\times\bR^n\rightarrow\bR$. 
The functions $s:\bR\times\bR^n\times\bR^{n_u}\rightarrow\bR$ and $s_f:\bR^n\rightarrow\bR$ are constructed from $q$ according to:
\ali{
	s(t,\axi,\anu) \triangleq &  l(t,\axi,\anu) + q_t(t,\axi) + q_\ax(t,\axi) \af(t,\axi,\anu)  \label{eq:Krot_s}\\
	s_f(\axi) \triangleq  & l_f(\axi) - q(t_f,\axi) \label{eq:Krot_sf}
}
If the next three statements hold:
\begin{enumerate}
\item	$q$ grants $s$ and $s_{f}$ the property:
\ali{
\eqgrp{
s(t,\ax_1(t),\au_1(t)) =& \max_{\axi\in\cX(t)} s(t,\axi,\au_1(t));\quad \forall t\in(0,t_f)\\
	s_f(\ax_1(t_f)) =& \max_{\axi\in\cX(t_f)} s_{f}(\axi)
	}\label{eq:Krot_AlgMaxProb}
}
\item $\hat \au$ is a  feedback that satisfies 
\ali{
\hat \au(t,\axi) = \arg\min_{\anu\in\cU(t)} s(t,\axi,\anu) 
	\label{eq:Krot_AlgUMinProb}
}
for all $t\in[0,t_f],\axi\in\cX(t)$.
\item $\ax_2$ is a state trajectory  that solves:
\ali{
	&\dot \ax_2(t) = \af(t,\ax_2(t),\hat \au(t,\ax_2(t)));
	\quad\ax_2(0) = \ax(0),
		 \label{eq:krotmeth_ss_imp}
	}
at any $\forall t\in(0,t_f)$, and $\au_2$ is a control trajectory such that $\au_2(t)=\hat \au(t,\ax_2(t))$.
\end{enumerate}
then $(\ax_2,\au_2)$ is an improved process. 
\end{theorem}

The method repeats process improvements over and over, thereby requiring to derive a sequence of improving functions---$\{q_k\}$. If such a sequence can be found, it allows to compute a process that is a candidate global optimum.\\

Generally, the procedure is summarized in the following algorithm.
Hereinafter, $s_k$ and $s_{f,k}$ are used for signifying $s$ and $s_f$ that are constructed by $q_k$.
First, some initial admissible process---$(\ax_0,\au_0)$, should be computed. 
Afterwards, the following steps are iterated for $k=\{0, 1, 2,\ldots\}$:
\begin{enumerate}
\item 
	Find $q_k$ that grants $s_k$ and $s_{f,k}$ the property
	\aliu{
	\eqgrp{
	s_k(t,\ax_k(t),\au_k(t)) =& \max_{\axi\in\cX(t)} s_k(t,\axi,\au_k(t))\\
		s_{f,k}(\ax_k(t_f)) =& \max_{\axi\in\cX(t_f)} s_{f,k}(\axi)
	}
	}
	at the current $(\ax_k,\au_k)$ and for all $t$ in $[0,t_f)$. 
\item 
	Find an improving feedback $\hat\au_{k+1}$ such that
	\aliu{
		\hat\au_{k+1}(t,\ax(t)) = \arg\min_{\anu\in\cU(t)} s_k(t,\ax(t),\anu)
	}
	for all $t$ in $[0,t_f]$
\item   Compute an improved process by solving
	\aliu{
		\dot\ax_{k+1}(t) =& \af{\big(}t,\ax_{k+1}(t),\hat\au_{k+1}(t,\ax_{k+1}(t)){\big)}
	}
	and setting $\au_{k+1}(t) = \hat\au_{k+1}(t,\ax_{k+1}(t))$.
\end{enumerate}

\begin{myremarks}
\item This algorithm generates an improving sequence of processes 
$\{(\ax_k,\au_k)\}$  such that $J(\ax_k,\au_k)\geq J(\ax_{k+1},\au_{k+1})$ \cite{95Krot}. 
\item If at some point, the processes in the improving sequence stop changing, then the process satisfies Pontryagin's minimum principle \cite{95Krot}, thereby inferring that it is a candidate optimum of the given problem.
\item The method has a significant advantage over algorithms that are based on small variations. 
The latter are constrained to small process variations, which is troublesome as: (1) it leads to slow convergence rate  and (2) for some optimal control problems small variations are impossible \cite{88Krot}. 
\end{myremarks}

\section{Main Results}
\label{sec:MR}
In order to solve the CBQR problem by Krotov's method, a class of suitable improving functions 
 should be formulated. Such a class is derived in the next lemmas.
 The notation $\aM(t,\axi)$ signifies an $\bR^{n\times n_u}$ matrix---
$\mtrx{\aN_1(t)\axi & \aN_2(t)\axi & \ldots \aN_{n_u}(t)\axi }$.
\begin{lemma}
\label{CBQR:lemma7}
Let 
\aliu{
	q(t,\axi) = \frac{1}{2}\axi^T\aP(t)\axi + \ap(t)^T\axi;\quad \aP(t_f) = \aH;\,\ap(t_f)=\aZe
}
where $\axi\in\bR^n$, $\aP:\bR\rightarrow\bR^{n\times n}$ is a smooth and symmetric, matrix function   and $\ap:\bR\rightarrow\bR^{n}$ is a smooth, vector function.\\

The vector of control laws, $(\hat u_i)_{i=1}^{n_u}$, that minimizes $s(t,\ax(t),\au(t))$, is given by
\ali{
\eqgrp{
\hat u_i(t,\ax(t)) =& \arg \min_{\anu\in\cU_i(t,\ax)} \Biggl(\frac{1}{2}\anu^T\aR(t)\anu\\
				&\hspace{1cm}+q_\ax(t,\ax(t))(\aB(t)+\aM(t,\ax(t)))\anu\Biggr)
}
		  \label{eq:CBQRargmin_u_s}
}
\end{lemma}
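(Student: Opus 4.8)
The plan is to specialize Krotov's definition of $s$ in \eqref{eq:Krot_s} to the CBQR data and then carry out the minimization over $\anu$ by inspection. First I would identify the problem functions by comparing Definitions~\ref{def:CBQRAdmProce}--\ref{def:CBQR} with the general forms \eqref{eq:StSpGenSt_2} and \eqref{eq:OCP_J}, giving the dynamics $\af(t,\axi,\anu)=\aA(t)\axi+\aB(t)\anu+\{\anu\aN(t)\}\axi+\ag(t)$, the running cost $l(t,\axi,\anu)=\tfrac12\axi^T\aQ(t)\axi+\tfrac12\anu^T\aR(t)\anu$, and the terminal cost $l_f(\axi)=\tfrac12\axi^T\aH\axi$. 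Differentiating the quadratic--linear $q$ and using the symmetry of $\aP$ yields the row vector $q_\ax(t,\axi)=\axi^T\aP(t)+\ap(t)^T$, while $q_t(t,\axi)$ carries no dependence on $\anu$ and may be set aside for the minimization.

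The step I expect to be the crux is recognizing that the state-bilinear term is in fact linear in the control. Specifically I would invoke the identity $\{\anu\aN(t)\}\axi=\sum_{i=1}^{n_u}\nu_i\aN_i(t)\axi=\aM(t,\axi)\anu$, which follows by reading the sum column by column against the definition of $\aM(t,\axi)$ as the matrix whose columns are the $\aN_i(t)\axi$. This lets me gather the control contribution in $\af$ as $(\aB(t)+\aM(t,\axi))\anu$, so that $\af$ is affine in $\anu$, namely $\af=\aA(t)\axi+(\aB(t)+\aM(t,\axi))\anu+\ag(t)$. Substituting into \eqref{eq:Krot_s} and grouping, every term free of $\anu$---that is, $\tfrac12\axi^T\aQ(t)\axi$, $q_t(t,\axi)$, and $q_\ax(t,\axi)(\aA(t)\axi+\ag(t))$---is additive, while the control-dependent part of $s$ collapses to $\tfrac12\anu^T\aR(t)\anu+q_\ax(t,\axi)(\aB(t)+\aM(t,\axi))\anu$.

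Finally I would appeal to the characterization of the improving feedback in \eqref{eq:Krot_AlgUMinProb}: since discarding additive terms that are constant in $\anu$ does not alter the $\arg\min$, minimizing $s$ over the admissible controls is equivalent to minimizing the collapsed control-dependent expression above. Evaluating at $\axi=\ax(t)$ and writing the admissible set component-wise, $u_i\in\cU_i(t,\ax)$ as in Definition~\ref{def:CBQRAdmProce}, produces exactly \eqref{eq:CBQRargmin_u_s}. The only remaining care is bookkeeping: confirming that $q_t$ is genuinely independent of $\anu$ and that the componentwise constraint structure $\cU_i(\ax)$ is compatible with the joint minimization, both of which are immediate from the forms derived above.
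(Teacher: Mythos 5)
Your proposal is correct and follows essentially the same route as the paper's proof: substitute the bilinear dynamics into Krotov's $s$, use $\{\anu\aN(t)\}\axi=\aM(t,\axi)\anu$ to collect the control-dependent part as $\tfrac12\anu^T\aR(t)\anu+q_\ax(t,\axi)(\aB(t)+\aM(t,\axi))\anu$, and discard the $\anu$-independent remainder in the $\arg\min$. The only cosmetic difference is that you make the column-by-column identity for $\aM$ explicit, which the paper leaves implicit.
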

\begin{proof}
Let $\anu\in\bR^{n_u}$. By substituting the state equation into $s$, we get:
\ali{
	\eqgrp{s(t&,\ax(t),\anu)	
			=q_t(t,\ax(t)) + q_\ax(t,\ax(t)) \af(t,\ax(t),\anu)\\
			&\hspace{3cm}+ \frac{1}{2} \left(\ax(t)^T\aQ(t)\ax(t)+ \anu^T\aR(t)\anu
			\right)} \notag\\
	\eqgrp{
		=&q_t(t,\ax(t)) 
		+ q_\ax(t,\ax(t)) \Bigl(\aA(t)\ax(t) + \aB(t)\anu \\
		&  \hspace{3.5cm}+ \{\anu\aN(t)\}\ax(t) + \ag(t)\Bigr)\\
		&+ \frac{1}{2}\ax(t)^T\aQ(t)\ax(t)+ \frac{1}{2}\anu^T\aR(t)\anu} \label{eq:CBQRExc_s1MIMO}\\
	 \eqgrp{ 
	 	=&f_2(t,\ax(t))+q_\ax(t,\ax(t))(\aB(t)+\aM(t,\ax(t)))\anu +\frac{1}{2}\anu^T\aR(t)\anu} \notag
}
where $f_2:\bR\times\bR^n\rightarrow\bR$ is some mapping, independent of $\anu$. It follows that a minimum of $s(t,\ax(t),\anu)$ over $\{\anu|\anu\in\cU(t,\ax)\}$  depends merely on the two last terms.
\end{proof}
A key part of the improving step is to find $q$ such that \Eqref{eq:Krot_AlgMaxProb} will hold.
In this paper the strategy is finding $q$ such that $s$ will no longer be state dependent. 
	This can be done thanks to the fact that when the control input is held fixed, \Eqref{eq:CBQR_StateEq} is affine w.r.t the state. Consequently \Eqref{eq:Krot_s}, induced by the suggested $q$, remains quadratic w.r.t the states. 
	Hence, it is possible to find $\aP$ and $\ap$ that remove the state-dependency from $s$, making the property in \Eqref{eq:Krot_AlgMaxProb} true. This is the gist of the next lemma.
\begin{lemma}
\label{CBQR:lemma8}
Let $(\ax,\au)$ be a given process, and let $\aP$ and $\ap$  be solutions to
\ali{
\eqgrp{
\dot\aP(t) =& - \aP(t)(\aA(t) +\{\au\aN(t)\}) \\
		&\hspace{1cm}- (\aA(t) +\{\au\aN(t)\})^T\aP(t) - \aQ(t)}
 	\label{eq:CBQRKrot_maxP}
}
with $\aP(t_f)=\aH$, and
\ali{
\eqgrp{
 \dot \ap(t) = & - \left(\aA(t) +\{\au\aN(t)\}\right)^T\ap(t) \\
 		&\hspace{1cm}-\aP(t)(\aB(t)\au(t) + \ag(t))}
\label{eq:CBQRKrot_maxp}
}
with $\ap(t_f)=\aZe$. Then 
\aliu{
	q(t,\ax(t)) =& \frac{1}{2}\ax(t)^T\aP(t)\ax(t) + \ap(t)^T\ax(t)
}
grants $s$ and $s_f$ the property: 
\ali{
	s(t,\ax(t),\au(t)) =& \max_{\axi\in\cX(t)} s(t,\axi,\au(t))\label{eq:CBQR_smax} \\
	s_f(\ax(t_f)) =& \max_{\axi\in\cX(t_f)} s_f(\axi)%\label{eq:CBQR_smax}
}
\end{lemma}
\begin{proof}
For any $\ax(t_f)\in\cX(t_f)$ we get
\aliu{
s_f(&\ax(t_f))= \frac{1}{2}\ax(t_f)^T\aH\ax(t_f) - q(t_f,\ax(t_f)) = 0
}
therefore it is obvious that $s_f(\axi) \leq s_f(\ax(t_f))$ for all $\axi\in\cX(t_f)$. 
The partial derivatives of $q$ are 
\aliu{
q_t(t,\axi) =&  \frac{1}{2}\axi^T\dot\aP(t)\axi + \dot \ap(t)^T\axi\\
q_\ax(t,\axi) =& \axi^T\aP(t) + \ap(t)^T
}
As $\aP(t)$ and  $\ap(t)$ satisfy \Eqsref{eq:CBQRKrot_maxP} and \eqref{eq:CBQRKrot_maxp}, 
substituting $q_t$ and $q_\ax$ into \Eqref{eq:CBQRExc_s1MIMO} and then arranging it in a canonical form for $\ax(t)$, yields:
\ali{
	\eqgrp{s(t,\ax&(t),\au(t))\\
	 	=&\frac{1}{2}\ax(t)^T \Bigl(\dot\aP(t) 	
	 	  + \aP(t)\left(\aA(t)+\{\au\aN(t)\}\right) \\
	 	 	& \hspace{0.5cm}+ \left(\aA(t)+\{\au\aN(t)\}\right)^T\aP(t) + \aQ(t)\Bigr)\ax(t)\\
	 	 &+\ax(t)^T\Bigl( \dot \ap(t) + \left(\aA(t)+\{\au\aN(t)\}\right)^T\ap(t) \\
	 	 	&\hspace{2.5cm}+\aP(t)(\aB(t)\au(t) + \ag(t))\Bigr)\\ 
	 	 & +\ap(t)^T(\aB(t)\au(t) + \ag(t)) 
	 	 		+  \frac{1}{2}\au(t)^T\aR(t)\au(t)}	\notag\\
		=&\ap(t)^T(\aB(t)\au(t) + \ag(t))  +  \frac{1}{2}\au(t)^T\aR(t)\au(t) \label{CBQR:lemma8:eq0}
}
Because the dependency on $\ax(t)$ has dropped, it is obvious that $s(t,\ax(t),\au(t)) = s(t,\axi,\au(t))$, and that
\aliu{
	s(t,\axi,\au(t))) \leq  s(t,\ax(t),\au(t)) 
}
for all $\axi\in\cX(t)$.
\end{proof}

Let $(\ax_1,\au_1)$ be a given admissible process. Solve \Eqsref{eq:CBQRKrot_maxP} and \eqref{eq:CBQRKrot_maxp} to $\au_1$ and obtain $\aP_1$ and $\ap_1$. 
An improving feedback---$\hat \au_2$, which  is constructed by \Eqref{eq:CBQRargmin_u_s} in conjunction with $\aP_1$ and $\ap_1$, allows to obtain an improved process - $(\ax_2,\au_2)$.\\

In other words, lemmas \ref{CBQR:lemma7} and \ref{CBQR:lemma8} allow to compute two sequences: $\{q_k\}$ and $\{(\ax_k,\au_k)\}$  such that the second one is an improving sequence. 
As $J$ is non-negative, it has an infimum and $\{J(\ax_k,\au_k)\}$ converges. In case that the processes stop changing, then a candidate optimum is obtained (see the second remark to theorem \ref{thm:krotmeth}).
\\

The corresponding algorithm  is summarized in \figref{fig:CBQRKrotAlg}.
Its output is an arbitrarily close approximation of  $\aP^*$ and $\ap^*$, which define the candidate optimal control law (\Eqref{eq:CBQRargmin_u_s}) through $q^*$. It should be noted that the use of absolute value in step (4) of the iterations stage is theoretically unnecessary. Though, due to numerical computation errors, the algorithm might lose its monotonicity when the processes are getting closer to a candidate optimum.\\
\begin{myremarks}
\item \Eqref{CBQR:lemma8:eq0} provides an alternative approach for computing  $J(\ax,\au)$. 
	As $s_f(\ax(t_f))=0$, it can be shown that 
\aliu{
	J(\ax,\au) =& J_{eq}(\ax,\au) \triangleq q(0,\ax(0)) 
		+ \int\limits_0^{t_f} s(t,\ax(t),\au(t)) \ud t\notag\\
		\eqgrp{ =& \frac{1}{2}\ax(0)^T\aP(0)\ax(0) + \ap(0)^T\ax(0) \\
			&+\Biggl(\int\limits_0^{t_f}  \ap(t)^T(\aB(t)\au(t) + \ag(t))  \\
				&\hspace{2cm}+  \frac{1}{2}\au(t)^T\aR(t)\au(t)
				\ud t\Biggr)
		}
}
\item	The minimization described in \Eqref{eq:CBQRargmin_u_s} defines the improving feedback. It should be solved for each $t\in[0,t_f]$ independently and can have different attributes, depending on $\cU$ and $\aR$. E.g., Let $\aR(t)\equiv \aR_0>0$ and $\cU(t,\ax)=\bR^{n_u}, \forall t\in[0,t_f]$. %that is---unconstrained quadratic programming problem. 
Then 
\aliu{
		\hat \au(t,\ax(t))  =&  
			\aR_0^{-1}(\aB(t)+\aM(t,\ax(t)))^T\left(\aP(t)\ax(t) + \ap(t)\right)
		}

	Another example is the singular case, e.g. when
 $\aR(t)=\aZe$ at some $t$. Then, in this time instance
the improving feedback is 
\aliu{
		\hat \au(t,\ax(t))  =& \arg \min_{\anu\in\cU_i(t,\ax)} 
			\Bigl((\ax(t)^T\aP(t) + \ap(t)^T)\\
				&\hspace{2cm}\times(\aB(t)+\aM(t,\ax(t)))\anu \Bigr) 
		}
This is in general a non-linear programming problem\cite{84Leun}. An existence of solution, in this case, depends on the nature of $\cU(t,\ax)$ and therefore should be discussed in the context of a given problem. 
\item 
Treat $\au$ as a given trajectory. Consequently, \Eqref{eq:CBQRKrot_maxP} is a linear matrix ODE with respect to $\ax$. Using Kronecker product, it can be reorganized into a standard linear ODE form \mbox{\cite{59Bellman}}.
Next, assume that the elements of $\aA$, $\aB$, $(\aN_i)_{i=1}^{n_u}$, $\ag$ and $\aQ$  are all integrable on $(0,t_f)$. 
If $\au$ is integrable on $(0,t_f)$ then this linear ODE meets Caratheodory's conditions. Thereby it has a unique absolutely continuous solution---$\aP$ \mbox{\cite{80Hale}}.
  As the domain is finite and $\aP$ is absolutely continuous then $\aP$ is bounded. It follows that $\aP(\aB\au+\ag)$ is integrable on $(0,t_f)$ and \Eqref{eq:CBQRKrot_maxp} is a linear ODE that meets Caratheodory's conditions too. Hence there exists a unique bounded solution---$\ap$. 
\end{myremarks}
%%%%%%%%%%%%%%%%%%%%%%%%%%%%%%%%%%%%%%%%%%%%%%%%%%%%%%%%%%%%%%%%%%%%%%%%%%%%%%%%%%%%%%%%%%%%%%%%%%%%%%%%%%%%%%%
\begin{figure*}
\begin{framed}
\noindent\textbf{Input} \\
	$\aA$, $\aB$, $\{\aN_i\}$, $\ag$, $\cU$, $\ax(0)$, $\aQ$, $\aR$, $\aH$.\\
\textbf{Initialization}
	\begin{enumerate}%[label=(\arabic*)]
	\item Select a convergence tolerance - $\epsilon>0$.
	\item Provide some admissible process $(\ax_0,\au_0)$.
		Solve
	\aliu{
	\dot\aP_0(t) =& - \aP_0(t)(\aA(t) +\{\au_0\aN(t)\}) - (\aA(t) +\{\au_0\aN(t)\})^T\aP_0(t) - \aQ(t)
	;&&
	\aP_0(t_f)   = \aH\\
	 \dot \ap_0(t) = & - \left(\aA(t) +\{\au_0\aN(t)\}\right)^T\ap_0(t)-\aP_0(t)(\aB(t)\au_0(t) + \ag(t))
	 ;&& 
	 \ap_0(t_f)=\aZe
	}
	\item Compute 
		\aliu{
			J_0(\ax_0,\au_0) = \frac{1}{2}\int\limits_0^{t_f} \ax_0(t)^T\aQ(t)\ax_0(t) 
			+\au_0(t)^T\aR(t)\au_0(t)\ud t + \frac{1}{2}\ax(t_f)^T\aH\ax(t_f)
			}
	\end{enumerate}
\textbf{Iterations}\\
	For $k=\{0,1,2,\ldots\}$:
	\begin{enumerate}
		\item 	Formulate an improved process by solving
		\aliu{
		 \dot \ax_{k+1}(t) =
		 &\aA(t)\ax_{k+1}(t) + \aB(t)\hat \au_{k+1}(t,\ax_{k+1}(t)) 
		 + \{\hat \au_{k+1}(\ax_{k+1}(t))\aN(t)\}\ax_{k+1}(t) + \ag(t) }
		to $\ax_{k+1}(0) = \ax(0)$, where
		\aliu{
		\hat \au_{k+1}(t,\ax(t))  =& \arg \min_{\anu\in\cU_i(t,\ax)} \ay_k(t,\ax(t))\anu  
									+\frac{1}{2}\anu^T\aR(t)\anu\\
		\ay_k(t,\axi) =& (\axi^T\aP_k(t) + \ap_k(t)^T)(\aB(t)+\aM(t,\axi))
		}
			Set $\au_{k+1}(t) =\hat \au_{k+1}(t,\ax_{k+1}(t))$.
		\item Solve
			\aliu{
		\dot\aP_{k+1}(t) =& - \aP_{k+1}(t)(\aA(t) + \{\au_{k+1}\aN(t)\}) 
				- (\aA(t) + \{\au_{k+1}\aN(t)\})^T\aP_{k+1}(t) - \aQ(t) \\
		\dot \ap_{k+1}(t) = & - \left(\aA(t) + \{\au_{k+1}\aN(t)\}\right)^T\ap_{k+1}(t) 
								-\aP_{k+1}(t)(\aB(t)\au_{k+1}(t) + \ag(t))
		}
		 to $\aP_{k+1}(t_f)=\aH$ and $\ap_{k+1}(t_f)=\aZe$.
		\item		Compute
		\aliu{
		J(\ax_{k+1},\au_{k+1}) =& \frac{1}{2}\int\limits_0^{t_f} \ax_{k+1}(t)^T\aQ(t)\ax_{k+1}(t) 
			+\au_{k+1}(t)^T\aR(t)\au_{k+1}(t)\ud t + \frac{1}{2}\ax_{k+1}(t_f)^T\aH\ax_{k+1}(t_f)
		}
		\item 	If $|J(\ax_k,\au_k)-J(\ax_{k+1},\au_{k+1})|<\epsilon$, stop iterating. Otherwise - continue.
	\end{enumerate}
\textbf{Output} \\ $\aP_{k+1}$, $\ap_{k+1}$.
\end{framed}
%%%
\caption{CBQR - Algorithm for successive improvement of control process.\label{fig:CBQRKrotAlg}}
\end{figure*}
%%%%%%%%%%%%%%%%%%%%%%%%%%%%%%%%%%%%%%%%%%%%%%%%%%%%%%%%%%%%%%%%%%%%%%%%%%%%%%%%%%%%%%%%%%%%%%%%%%%%%%%%%%%%%%%%

%%%%%%%%%%%%%%%%%%%%%%%%%%%%%%%%%%%%%%%%%%%%%%%%%%%%%%%%%%%%%%%%%%%%%%%%%%%%%%%%%%%%%%%%%%%%%%%%%%%%%%%%%%%%
\section{Numerical Example}
\label{sec:NE}
%%%%%%%%%%%%%%%%%%%%%%%%%%%%%%%%%%%%%%%%%%%%%%%%%%%%%%%%%%%%%%%%%%%%%%%%%%%%%%%%%%%%%%%%%%%%%%%%%%%%%%%%%%%%
\begin{figure}
	\begin{center}
	\includegraphics{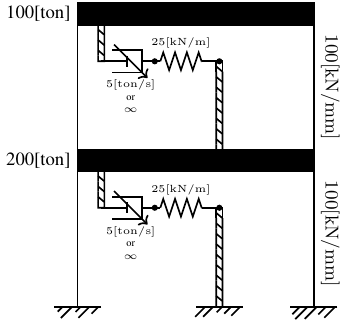}
	\end{center}
\caption{A dynamic scheme of a two floors structure, configured with two SAVS devices. \label{fig:krotmeth:cbqr:savs:dynschem}}
\end{figure}
The use of this method is exemplified here by a structural control problem. 
The method is applied to an optimal control problem of a structure with two degrees of freedom and two SAVS devices \cite{93Kobori}. Shear model is used for plant modeling. Its dynamic scheme and the devices are described in \figref{fig:krotmeth:cbqr:savs:dynschem}. The masses are lumped and located at the floors' ceilings. They  are $100\ton$ and $200\ton$, at the first and second floors, respectively. The floors' horizontal stiffnesses are identical and equal to $100\e{6}\nOmt$. Let $z_1$ and $z_2$ be the horizontal DOF in the ceilings. 
The related mass and stiffness matrices are:
\aliu{
\aM = \mtrx{100 & 0 \\ 0 & 200}\e{3};\;
\aK = \mtrx{200 & -100 \\ -100 & 100}\e{6};
} 
The structure is assumed to have inherent damping, described by a proportional damping matrix $\aC_d=0.001\aM+0.0001\aK$. A horizontal seismic external excitation was applied to the structure by ground acceleration signal of $\ddot z_g(t) = 3\sin(16.55 t)$. 
The equation of motion is:
\ali{
\aM\ddot\az(t) + \aC_d\dot\az(t) + \aK\az(t) = -\agam\ddot z_g(t) + \aPsi \aw(t) 
}
with:
\aliu{
\aPsi = \mtrx{1& -1 \\ 0 & 1}; \quad 
\gamma = \mtrx{1\\ 1}
}
Here $\az \triangleq (z_1 , z_2)$ is the trajectory of the DOF displacements;
$\aw\triangleq  (w_1 , w_2)$ is the trajectory of the control forces. $w_i(t)>0$ means that the force in the i\textsuperscript{th} device is a tension force. 
Spring and variable dashpot were used for modeling each SAVS device. The device's spring stiffness is 
$k_{savs}=25\e{6}\nOmt$, accounting for the bracing stiffness and the fluid's bulk modulus. The variable dashpot can provide either a finite damping coefficient of $5\e{3}\kgOsec$ or infinity, depending whether it is unlocked or locked. Here, the control policy is restricted to one out of three locking patterns\cite{93Kobori}---(1) both devices are unlocked, (2) only the second device is unlocked or (3) both devices are locked. Each device provides another state variable to the system. Here these states variables are represented by the control forces' signals, providing another two state equations:
\aliu{
\dot \aw(t) =& -5\e{3}\left(u_1(t)\mtrx{0 & 0 \\ 0 & 1} + u_2(t)\mtrx{1 & 0 \\ 0 & 1}\right)\aw(t)\\ 			&-25\e{6}\mtrx{1&0\\-1&1}\dot\az(t)
}
where $\au\triangleq (u_1 , u_2)$ is the control input trajectory. These settings lead to an admissible set of control inputs
\aliu{
\au(t) \in \cU = \left\{\mtrx{0\\1},\mtrx{1\\0},\mtrx{0\\0}\right\}
}  
Every object in this set reflects the SAVS devices' unlocking patterns---1,2 and 3---respectively. Note that $\cU$'s finiteness prohibits the use of variational methods with respect to $\au$.\\

Putting these equations altogether boils down to the bilinear state equation: 
\aliu{
&\dot\ax(t) = \aA\ax(t) + u_1(t)\aN_1\ax(t) + u_2(t)\aN_2\ax(t) + \ag(t)\\
&\aA = \mtrx{	\aZe & \aI & \aZe \\ 
	-\aM^{-1}\aK & -\aM^{-1}\aC & \aM^{-1}\aPsi \\
	\aZe & -\aPsi^T/k_{savs} &  \aZe 
	}\\
&\aN_1 = \diag(0,0,0,0,0,-5000)\\
&\aN_2 = \diag(0,0,0,0,-5000,-5000)\\
&\ag(t) = \mtrx{0&0&-3&-3&0&0}^T\sin(16.55t)
}
where the state trajectory is $\ax=(z_1 , z_2 , \dot z_1 , \dot z_2 , w_1 , w_2)$. The time span for the problem was set to $(0,5)$ and the initial conditions were set to zero.\\

The performance evaluation accounts for inter-story drifts and control forces. It is:
\aliu{
J(\ax,\au)& = \Biggl(\frac{1}{2}\int\limits_0^5 x_1(t)^2\e{5} 
			+ (x_2(t)-x_1(t))^2\e{5} + 5(x_5(t)^2 \\
			&+ x_6(t)^2) \ud t\Biggr)+\frac{1}{2}\Biggl(x_1(5)^2\e{4}+ (x_2(5)-x_1(5))^2\e{4}\\
		&+ 50(x_5(5)^2 + x_6(5)^2)\Biggr)
}
$\aQ$ and $\aH$ were constructed accordingly. It follows that $\aR\equiv\aZe$ as 
the control inputs have no weight in $J$.\\

According to step 1 of the iterations stage, an improving feedback should be found at each time instance. In the addressed problem, it reduces to \[\hat \au_{k+1}(t,\ax(t))  = \arg \min_{\anu\in\cU} \ay_k(t,\ax(t))\anu\] where $\ay_k$ is row vector, defined in the algorithm. This minimization should be carried  over a finite set of points and therefore can be calculated by a simple table-search technique.\\

The method was realized by MATLAB through a standard desktop PC. Solving the ODEs was based on a 4\textsuperscript{th} order Runge-Kutta method.
Two cases were computed. 
The first case is a straightforward application of the suggested method. It is referred to as CBQR. 
The second one, referred to as MPC, describes a \textit{model-predicitve-control} with a CBQR optimizer. Its horizon was set to 1 second and 3 improvements were carried out in each control update.
In the CBQR case the improvement process was iterated 15 times, each lasting approx 5.9 seconds in average. 
Table \ref{tab:krotmeth:cbqr:savs:J} provides $J$'s values of the processes obtained in this case. In this table $i$ stands for the process number. The table also provides the relative change in $J$, signified by $\Delta J_i\triangleq J_i-J_{i-1}$. $J$'s value at the MPC process is $1.78\e{13}$. 
\begin{table} 
	\centering
	\caption{Peformance index values at each process. Here $\Delta J_i = J_i-J_{i-1}$
		\label{tab:krotmeth:cbqr:savs:J}.}
	\label{tab:Jconverge}
\begin{tabular}{ccc|ccc}  
	$i$ & $J$ & $\Delta J$ & $i$ & $J$ & $\Delta J$ \\
0 & $4.59\e{14}$ & - & 8 & $4.51\e{12}$ & $-2.87\e{10}$\\
1 & $1.54\e{14}$ & $-3.05\e{14}$ & 9 & $4.49\e{12}$ & $-1.38\e{10}$\\
2 & $1.44\e{13}$ & $-1.39\e{14}$ & 10 & $4.49\e{12}$ & $-5.34\e{9}$\\
3 & $5.54\e{12}$ & $-8.9\e{12}$ & 11 & $4.48\e{12}$ & $-8.91\e{9}$\\
4 & $4.66\e{12}$ & $-8.81\e{11}$ & 12 & $4.47\e{12}$ & $-8.72\e{9}$\\
5 & $4.65\e{12}$ & $-4.5\e{9}$ & 13 & $4.49\e{12}$ & $1.92\e{10}$\\
6 & $4.6\e{12}$ & $-4.9\e{10}$ & 14 & $4.47\e{12}$ & $-1.41\e{10}$\\
7 & $4.53\e{12}$ & $-6.9\e{10}$ & 15 & $4.47\e{12}$ & $-7.55\e{9}$\\

\end{tabular}
\end{table} 
The form of the control signals, synthesized by the feedback obtained in the CBQR case, are showed in \figref{fig:krotmeth:cbqr:savs:u}. As the signals are rapidly varying they are presented over a representing time interval $t\in[0.5,1.2]$. Careful inspection of $\au$ will tell that $u_1(t)u_2(t)=0$ for all $t$, meaning that only one unlocking pattern is active in each moment, as it was required  by the problem's definition.
Generally, SAVS controllers fall into one out of two operating modes---\textit{resetting mode} or \textit{switching mode}. The former refers to devices that are normally closed and are opened momentarily. The latter is more general, referring to  devices that are switching from open to close and vice-versa, without favoring any of these states \cite{01Nasu}. 
\figref{fig:krotmeth:cbqr:savs:u} presents finite unlocking segments, thereby manifesting that switching mode is the optimal in this case.
\begin{figure} 
\centering
\subfloat[$u_1$---unlocking pattern 1: only device 2 is unlocked]{
\centering
\includegraphics{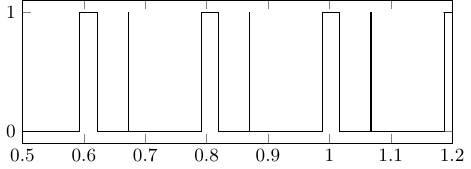}
}\\[5mm]
\subfloat[$u_2$---unlocking pattern 2: both devices are unlocked]{
\centering
\includegraphics{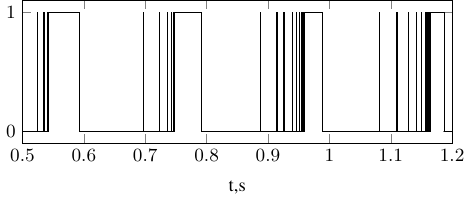}
}
\caption{ Control signals of the CBQR case.\label{fig:krotmeth:cbqr:savs:u}}
\end{figure} 
\figref{fig:krotmeth:cbqr:savs:w} presents the control force signals of each case. When a device is unlocked, the elastic energy that it gradually accrued during its locked state, rapidly drops and zeros the control force in the device. This is the reason for $\aw$'s saw-tooth shape. Each of these drops is related with one of the unlocking patterns, related to $\au$. 
\begin{figure} 
\centering
\subfloat[]{
\centering
\includegraphics{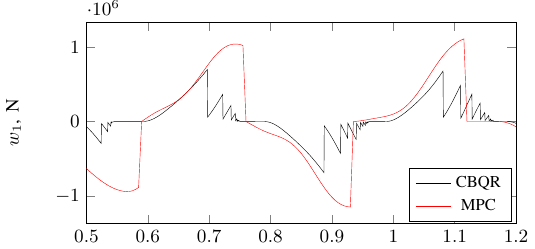}
}\\[5mm]
\subfloat[]{
\centering
\includegraphics{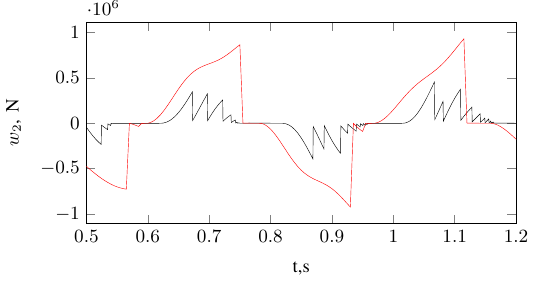}
}
\caption{Control force signals (states $x_5$ and $x_6$) \label{fig:krotmeth:cbqr:savs:w}.}
\end{figure} 
An effective control is expected to reduce the amplitude of the vibrating structure during the ground motion. This goal was indeed attained by each of the synthesized controllers, as shown in \figref{fig:krotmeth:cbqr:savs:z}.
\begin{figure} 
\centering
\subfloat[]{
\centering
\includegraphics{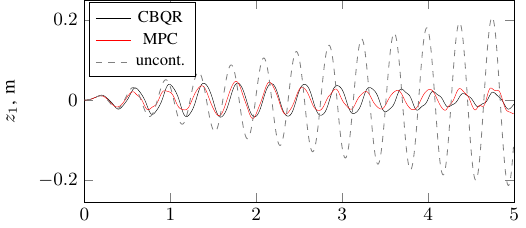}
}\\[5mm]
\subfloat[]{
\centering
\includegraphics{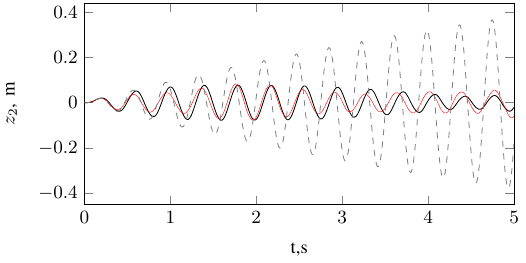}
}
\caption{ Displacements at the DOFs (states $x_1$ and $x_2$) \label{fig:krotmeth:cbqr:savs:z}.}
\end{figure}

\section{Conclusion}
\label{sec:CL}
The continuous time bilinear quadratic regulator is an optimal control problem, concerning a finite dimensional, continuous time bilinear state equation and a quadratic performance index to be minimized. In this paper the state equation is time varying, given in a general bilinear form and comprises a deterministic, a-priori known excitation. The control trajectory is constrained to an admissible set that is left here in a general form. This form should be set in accordance to the nature of a specific/class of addressed CBQR problem. The performance index is a functional, quadratic  in the state variables and control signals, and includes a terminal cost. This problem, which is written here in general structure, is addressed by Krotov's method. The main results comprises a definition of a suitable sequence of improving functions, allowing to generate an improving sequence of processes. This sequence  is convergent by means of the performance index. 
If the processes converge too, then the obtained process 
provides an arbitrarily close approximation to a candidate optimal feedback. The trajectory, synthesized by this feedback, satisfies Pontryagin's minimum principle. Even though it is merely a necessary condition, it is normally enough in many practical applications. 
As a matter of fact, assuming that the improving sequence, obtained by Krotov's method, consists of at least two different elements and the processes converge, then it is not only that the outcome satisfies Pontryagin's minimum principle but it is also guaranteed not being  maximal \cite{95Krot}. Solving by this method, which is not relying on small variations, is a significant advantage over such that do. It makes the suggested solution valid in problems with control inputs that don't allow small variations, e.g. control vectors that are constrained to closed subsets of $\bR^{n_u}$.  
Finally, the paper summarizes the results in an algorithm and furnishes a numerical example of structural control. The example consists of a structure with two floors that is controlled by two semi-active, variable-stiffness devices. This configuration constrains the control signals to a discrete set of values, hence prohibiting the use of methods that rely on small variations of the control signals.
Two control solutions are presented to this problem---a formal CBQR solution as well as an MPC with a CBQR optimizer.
\\

These results provide a useful tool for control design of bilinear systems and/or as a benchmark for other control strategies for such systems. 
  
% Generated by IEEEtran.bst, version: 1.12 (2007/01/11)

\end{document}